\documentclass[11pt]{amsart}
\usepackage[utf8]{inputenc}
\usepackage[margin=1.3in]{geometry}
\usepackage{times}
\usepackage{amsfonts}
\usepackage{amssymb}
\usepackage{amsthm}
\usepackage{mathtools}
\usepackage{mathrsfs}
\usepackage{caption}
\usepackage{subcaption}
\usepackage{bbm}
\usepackage[export]{adjustbox}
\usepackage{csquotes}
\usepackage{stmaryrd}
\usepackage[all]{xy}
\usepackage{tikz-cd}
\usetikzlibrary{matrix}
\usepackage{graphicx} 
\usepackage{float}
\usepackage{epstopdf}
\usepackage[linktocpage]{hyperref}
\hypersetup{colorlinks=true,linkcolor=red,citecolor=blue,urlcolor=green}
\usepackage{color}
\definecolor{note}{rgb}{0,0,1}
\usepackage{enumitem}

\newtheorem{theorem}{Theorem}[section]

\newtheorem{lemma}[theorem]{Lemma}
\newtheorem{proposition}[theorem]{Proposition}
\theoremstyle{definition}
\newtheorem{definition}[theorem]{Definition}
\newtheorem{remark}[theorem]{Remark}
\newtheorem{example}[theorem]{Example}

\numberwithin{equation}{section}
\numberwithin{theorem}{section}

\newcommand{\op}{\operatorname}

\newcommand{\be}{\begin{enumerate}}
\newcommand{\ee}{\end{enumerate}}

\usepackage[english]{babel}
\usepackage{hyphenat}
\usepackage[backend=bibtex,style=alphabetic,maxalphanames=4,maxnames=4]{biblatex}

\renewbibmacro{in:}{}
\DeclareDelimFormat[bib,biblist]{nametitledelim}{\addcomma\space}
\DeclareFieldFormat*{title}{\mkbibitalic{#1}\addcomma}
\DeclareFieldFormat*{journaltitle}{#1}
\DeclareFieldFormat*{volume}{\mkbibbold{#1}}
\DeclareFieldFormat{pages}{#1}
\DeclareFieldFormat[misc]{date}{preprint {#1}}
\DeclareFieldFormat{mr}{%
  MR\addcolon\space
  \ifhyperref
    {\href{http://www.ams.org/mathscinet-getitem?mr=MR#1}{\nolinkurl{#1}}}
    {\nolinkurl{#1}}}

\AtEveryBibitem{
  \clearfield{url}
  \clearfield{number}
  \clearfield{doi}
  \clearfield{issn}
  \clearfield{isbn}
  \clearfield{eprintclass}
}
\AtEveryBibitem{\ifentrytype{book}{\clearfield{pages}}{}}

\bibliography{references}

\usepackage{fancyhdr}
\usepackage{todonotes}

\RequirePackage{tikz}
\usetikzlibrary{arrows.meta,calc,decorations.markings}

\title[A polynomial invariant of planar curves]{A polynomial invariant of planar curves}

\author{Zhiyun Cheng}
\address{School of Mathematical Sciences, Beijing Normal University; 
Laboratory of Mathematics and Complex Systems, Ministry of Education, Beijing 100875, China}
\email{czy@bnu.edu.cn}

\author{Yin Tian}
\address{School of Mathematical Sciences, Beijing Normal University; 
Laboratory of Mathematics and Complex Systems, Ministry of Education, Beijing 100875, China}
\email{yintian@bnu.edu.cn}

\author{Tianyu Yuan}
\address{School of Mathematical Sciences, Eastern Institute of Technology, Ningbo, Zhejiang, 315200, China}
\email{tyyuan@eitech.edu.cn}

\date{\today}

\keywords{Plane curves, strangeness, Arnold invariant, skein relation}

\begin{document}

\maketitle

\begin{abstract}
We construct a polynomial invariant of planar curves via skein relations, which generalizes Arnold's numerical invariant $\operatorname{St}$. 
\end{abstract}

\section{Introduction}\label{section1}
Consider the space of all $C^{\infty}$-immersions of $S^1$ into $\mathbb{R}^2$, Whitney's classical result tells us that two generic plane curves are regular homotopic if and only if they share the same rotation number \cite{Whitney1937}. Here we say a plane curve is \emph{generic} if all the self-intersections are a finite number of transverse double points. It follows immediately that each component of this space consists of immersions with the same rotation number. Inspired by Vassiliev's finite-type invariants of knots, Arnold breathed new life to the study of plane curves in 1990s by considering the complement of generic plane curves \cite{Arnold1994}. The complement consists of three types of discriminant hypersurfaces: the direct self-tangency perestroika (when the two velocity vectors have the same direction), the inverse self-tangency perestroika (when the two velocity vectors have opposite directions) and the triple point crossing. See Figure \ref{fig: arnold}.

\begin{figure}[ht]
    \centering
    \includegraphics[width=14cm]{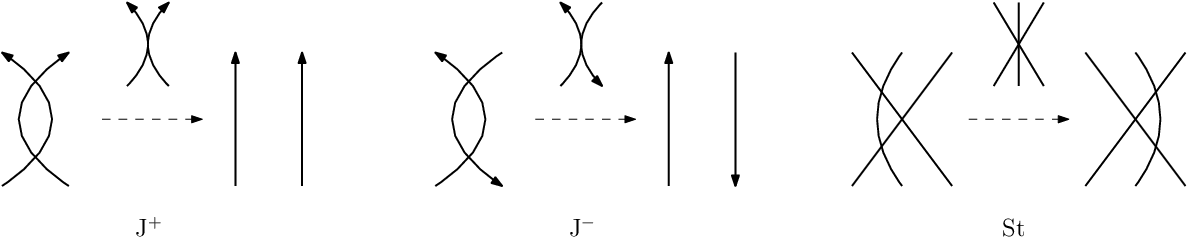}
    \caption{}   
    \label{fig: arnold}
\end{figure}

Arnold introduced three numerical invariants $\operatorname{J}^+, \operatorname{J}^-$ and strangeness $\operatorname{St}$ by assigning values to some standard curves and their jumps when a plane curve passing through a hypersurface. More precisely,
\begin{itemize}
\item $\operatorname{J}^+$ is increased by 2 when it meets a positive direct self-tangency perestroika and is preserved under the inverse self-tangency perestroika and the triple point crossing;
\item $\operatorname{J}^-$ is decreased by 2 when it meets a positive inverse self-tangency perestroika and is preserved under the direct self-tangency perestroika and the triple point crossing;
\item $\operatorname{St}$ is increased by 1 when it meets a positive triple point crossing and is preserved under the two self-tangency perestroika.
\end{itemize}
The reader is referred to \cite{Arnold1994} for more details about the coorientation of the discriminant. 

It is not a simple task to calculate Arnold's three invariants directly using the original definitions, especially for plane curves with many self-intersections. Explicit formulas for $\operatorname{J}^+$ and $\operatorname{J}^-$ in terms of topological data of plane curves were introduced by Viro in \cite{Viro1996}. For $\operatorname{St}$, in \cite{Shumakovitch1995} Shumakovitch provided three explicit formulas for it. A unifying approach to these three invariants was found by Polyak via Gauss diagrams \cite{Polyak1998}. The quantization of $\operatorname{J}^+, \operatorname{J}^-$ and $\operatorname{St}$ were realized in \cite{LP2013}, \cite{Viro1996} and \cite{Ito2023}, respectively. Besides, Arnold's invariant $\operatorname{J}^+$ has also been found to have deep connections with other research areas, such as contact geometry \cite{Ng2006} and restricted three-body problem \cite{CFK2017}.

Current research is predominantly concentrated on $\operatorname{J}^+, \operatorname{J}^-$, with comparatively few investigations into the $\operatorname{St}$ invariant. In this paper, we introduce a polynomial invariant for multi-component plane curves. Some part of it can be regarded as a generalization of the $\operatorname{St}$ invariant. 
The main idea behind this polynomial invariant is to apply an analogue of various skein relations in knot theory. 
Our starting point is the skein relation R3 corresponding to the triple point crossing; see the figure below. It is natural to expect that the resulting polynomial is related to Arnold's invariant $\op{St}$. 
This skein relation is motivated by Floer homology in symplectic orbifolds $T^*(\mathbb{C}/\mathbb{Z}_3)$ \cite{HondaKrutowskiTianYuan2026}. 
We consider {\em bulk deformations} in Floer homology for a cotangent fiber. The resulting skein relation has a diagrammatic interpretation as the skein relation R3.  

Let $R=\mathbb{Z}[t][[h]]$ be the coefficient ring, we construct an invariant 
\begin{center}
$A: \{\mbox{isotopy classes of generic oriented plane curves}\} \to R$,
\end{center}
which satisfies the following skein relations for the plane curves.

\begin{enumerate}[leftmargin=2.5em,labelsep=0.8em]
  \item[R1]
  \makebox[\linewidth][c]{%
  \(
  \includegraphics[height=1.5cm,valign=c]{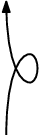}
  =
  t^{-1}
  \includegraphics[height=1.5cm,valign=c]{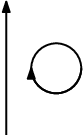},
  \qquad
  \includegraphics[height=1.5cm,valign=c]{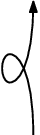}
  =
  t^{-1}
  \includegraphics[height=1.5cm,valign=c]{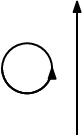}
  \)
  }

  ~\\
  \item[R2]
  \makebox[\linewidth][c]{%
  \(
  \includegraphics[height=1.5cm,valign=c]{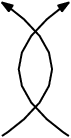}
  =
  \includegraphics[height=1.5cm,valign=c]{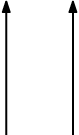}
  +
  th\,
  \includegraphics[height=1.5cm,valign=c]{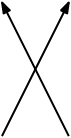}
  \)
  }

  ~\\
  \item[R2']
  \makebox[\linewidth][c]{%
  \(
  \includegraphics[height=1.5cm,valign=c]{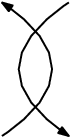}
  =
  \includegraphics[height=1.5cm,valign=c]{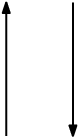},
  \qquad
  \includegraphics[height=1.5cm,valign=c]{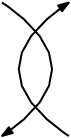}
  =
  \includegraphics[height=1.5cm,valign=c]{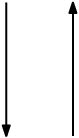}
  \)
  }

  ~\\
  \item[R3]
  \makebox[\linewidth][c]{%
  \(
  \includegraphics[height=1.5cm,valign=c]{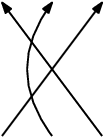}
  -
  \includegraphics[height=1.5cm,valign=c]{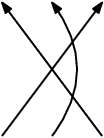}
  =
  h
  \left(
  \includegraphics[height=1.5cm,valign=c]{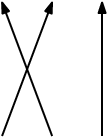}
  -
  \includegraphics[height=1.5cm,valign=c]{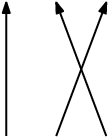}
  \right)
  \)
  }

  ~\\
  \item[R3']
  \makebox[\linewidth][c]{%
  \(
  \includegraphics[height=1.5cm,valign=c]{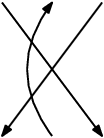}
  -
  \includegraphics[height=1.5cm,valign=c]{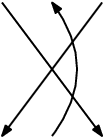}
  =
  t^{-1}h
  \left(
  \includegraphics[height=1.5cm,valign=c]{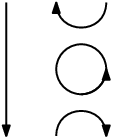}
  -
  \includegraphics[height=1.5cm,valign=c]{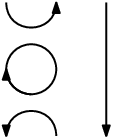}
  \right)
  \)
  }

  ~\\
  \item[R0]
  \makebox[\linewidth][c]{%
  \(
  \includegraphics[height=1.5cm,valign=c]{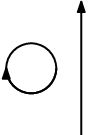}
  =
  (1+h)
  \includegraphics[height=1.5cm,valign=c]{figures/skein/unc.eps},
  \qquad
  \includegraphics[height=1.5cm,valign=c]{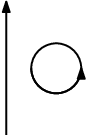}
  =
  (1+h)
  \includegraphics[height=1.5cm,valign=c]{figures/skein/cu.eps}
  \)
  }

  ~\\
  \item[I]
  \makebox[\linewidth][c]{%
  \(
  \prescript{*}{}{
  \includegraphics[height=0.7cm,valign=c]{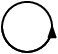}
  }
  =
  {
  \includegraphics[height=0.7cm,valign=c]{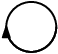}
  }^{*}
  =
  t
  \)
  }

  ~\\
  \item[D]
  \makebox[\linewidth][c]{%
  \(
  A(L_1\sqcup L_2)=
  A(L_1)A(L_2)
  \)
  }
\end{enumerate}

Here we have some remarks:
\begin{enumerate}
  \item The relations R2 and R2' correspond to the direct self-tangency perestroika and inverse self-tangency perestroika respectively; 
  \item The relations R3 and R3' correspond to the two cases of triple point crossing perestroika;
  \item The relation R0 shows the effect of moving a contractible component across an arc; 
  \item The initial-value relation I says that a contractible component near the base point at infinity is evaluated as $t$;
  \item The relations R0 and I together say that a trivial component is evaluated as $t(1+h)^k$, where $k$ is the \emph{winding number} of the plane curves about the exterior region adjacent to the trivial component;
  \item The relation D says that the invariant $A$ is multiplicative with respect to the disjoint union.
\end{enumerate}

Although $t^{-1}$ appears in relations $R1$ and $R3'$, it would be cancelled out by the initial value $t$ appeared in relation I. For $(1+h)^{-1}$ that may occur in relation R0, we expand $(1+h)^{-1}$ into a power series
\begin{center}
$(1+h)^{-1}=1-h+h^2-h^3+\cdots$.
\end{center}
Thus, the resulting invariant $A(L)$ takes value in $R=\mathbb{Z}[t][[h]]$ for any plane curves $L$. As a notation convention, throughout this paper we use $L$ to denote multi-component plane curves, and use $K$ when the component number is one.

It is not hard to observe that the eight relations above allow us to calculate $A(L)$ for any plane curve $L$. However, a natural question is whether the invariant $A$ is well-defined. In other words, if we choose two distinct computational paths for $A(L)$, do we always obtain the same value? The following result provides an affirmative answer to this question.

\begin{theorem}\label{theorem1.2}
The invariant $A(L)$ is well-defined. 
\end{theorem}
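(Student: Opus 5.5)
We will show that $A$ is well-defined by writing down a state-sum formula for $A(L)$ that does not depend on any choices, and then checking that this formula satisfies each of the relations R1--D. Since these relations determine $A$ on every plane curve, any invariant that obeys all of them must coincide with the formula. Hence every computational path gives the same answer.

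\textbf{Construction.} Fix a generic oriented plane curve $L$. A \emph{state} $s$ is a subset of the double points of $L$. At each chosen double point we perform the orientation-preserving smoothing, which is the Seifert smoothing used in R2 and R3. The result $L_s$ is a collection of disjoint embedded oriented circles. For such a family of circles the relations R0, I and D force a single value:
\[
\prod_{C\subset L_s} t\,(1+h)^{k(C)},
\]
where $k(C)$ is the winding number of the rest of $L_s$ about the exterior region adjacent to $C$. The twist factors $t^{-1}$ from R1 and R3$'$ must enter as a weight attached to each crossing. We therefore define
\[
A(L)=\sum_{s} h^{|s|}\, w(s)\prod_{C\subset L_s} t\,(1+h)^{k(C)},
\]
where $w(s)$ is a product of local weights. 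Each local weight is a power of $t$, possibly with a sign, and is determined by the type of the crossing; the type is read off from the Gauss diagram, for instance by the sign and index of the crossing in Polyak's sense. The weights will be pinned down so that R2 holds: smoothing the two crossings of a direct bigon yields the $th$ term. The key requirement is that $w$ is computed from local or Gauss-diagram data, and this is what makes $A(L)$ manifestly independent of choices.

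\textbf{Verification.} We then check the relations one at a time.
\begin{enumerate}
\item D, I and R0 follow directly from the product formula. Moving a contractible circle across an arc shifts its adjacent winding number by exactly one.
\item R1 is a local computation at a kink. The kink crossing is either unsmoothed, or smoothed into a small circle whose contribution $t(1+h)^{k}$ must combine with the unsmoothed term to give $t^{-1}$ times the curve with the kink removed. This fixes the weight of a kink-type crossing.
\item R2 and R2$'$ are verified by summing over the four states of the two tangency crossings.
\end{enumerate}

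\textbf{Main obstacle.} The hardest step will be R3 and R3$'$. Here we sum over the $2^3$ states of the three crossings on each side of the triple-point move and compare. States that smooth none of the three crossings, or exactly one, match bijectively between the two sides. States that smooth two or all three crossings produce different circle configurations, because the winding numbers of the small triangle regions change. The leftover terms must assemble into $h$ times the difference of the two right-hand diagrams; for R3$'$ the factor is $t^{-1}h$. I would first settle the crossing weights from R1 and R2, then run this local bookkeeping separately for each cyclic orientation type of the triple point. If the naive weights fail at this stage, the fallback is an inductive proof, which proceeds as follows.
\begin{enumerate}
\item Use the relations to reduce any curve to a standard descending form. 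Choose a basepoint and resolve crossings in the order they are met, in the spirit of the usual Kauffman/Jones well-definedness argument.
\item Show by induction on the number of crossings that the result does not depend on the basepoint or the resolution order.
\item Reduce this independence to checking the compatibility of R3 with R2 at a small number of overlapping local configurations.
\end{enumerate}
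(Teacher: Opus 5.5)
\textbf{There is a genuine gap: the choice-free formula is never actually constructed.} Your overall strategy is the paper's: give a formula that depends on no choices, verify R1--D, and conclude by uniqueness. But producing that formula is the whole content of the theorem, and your proposal does not do it.

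\emph{The state sum is not well formed.} Smoothing only the crossings in $s$ leaves the other double points of $L$ in place. So $L_s$ is not a union of embedded circles, and $\prod_{C\subset L_s}t(1+h)^{k(C)}$ has no meaning. Evaluating a curve that still has double points is exactly the problem you are trying to solve. The same circularity appears in your R1 check: the ``unsmoothed'' term of a kink is just the kinked curve again. If instead every crossing is smoothed, there is a single state and the formula returns a multiple of $h^{c(L)}$. For the figure-eight curve that is a multiple of $h$, whereas R1, I and D force the value $t$.

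\emph{The hard relations are left open.} The weights $w(s)$ are never determined, and you explicitly leave R3/R3$'$ to hope. Your expected bookkeeping there is also wrong. In the braid-like move $s_1s_2s_1\to s_2s_1s_2$, the unsmoothed states are the two sides of the move themselves, so ``matching'' them presupposes the invariance being proved. Among the one-smoothing states, smoothing the middle crossing gives $s_1^2$ on one side and $s_2^2$ on the other, and these do not match.

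\emph{The fallback induction is only an outline.} A plane curve carries no over/under data, so a ``descending form'' is undefined. A regular homotopy to a standard curve need not decrease the crossing number. So neither the induction nor the finite list of compatibility checks is actually set up.

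\textbf{The missing idea is to give the unsmoothed crossings a three-dimensional meaning.}
\begin{itemize}
\item The paper resolves each double point as $q[X_+]-c[I]$: either a positive crossing in $\mathbb{R}^3$ or the oriented smoothing.
\item Each of the $2^{c(L)}$ resulting links is evaluated by the framed HOMFLY-PT polynomial with $a=1+ct$, $a^{-1}=1+c't$, $qz=c-c'$. This takes place in the ring $\tilde{R}$ where $q^2=1+h$, $c+c'=-th$, $cc'=h$.
\item The result is multiplied by the global factor $q^{2\eta(L)-r(L)}$, read off from the Seifert circles. This factor supplies the $1+h$ in R0 and absorbs the $q^2$ that appears in R2$'$.
\end{itemize}

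For R3, the all-$X_+$ states cancel because they are isotopic links. The one non-isotopic pair of one-smoothing states differs by a crossing change, $U_1-U_2=z(P_1-P_2)$. The coefficient then collapses to $-q^2cz+qc^2=qcc'=qh$. R1, R2, R2$'$ and R3$'$ reduce to identities such as $qa-cqt=q$ and $ca^{-1}+c'a-cc't=0$.

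The per-crossing weights here are $q=\sqrt{1+h}$ and a root $c$ of $x^2+thx+h$, not signed powers of $t$. Nothing in your proposal indicates that weights of your restricted form could satisfy these relations.
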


Let $L$ be a plane curve with $n(L)$ components, assume $A(L)=\sum\limits_{k\ge 0}a_k(L)h^k$, where $a_k(L) \in \mathbb{Z}[t]$. It turns out that $a_1(L)$ can recover Arnold's invariants $\operatorname{J}^+$ and $\operatorname{St}$ for one-component plane curves, and Viro's invariant $\op{J}^{+}_{\op{Vir}}$ as a generalization of $\op{J}^{+}$ for multi-component curves \cite{Viro1996}. More precisely, we have the following results.

\begin{theorem}\label{thm A}
Let $L$ be a plane curve with $n(L)$ components, then we have
\begin{enumerate}
    \item the $0$-th coefficient $a_0(L)=t^{n(L)}$;
    \item the first coefficient $a_1(L)|_{t=1}=\frac{1}{2}\op{J}^{+}_{\op{Vir}}(L)$;
    \item for a one-component plane curve $K$, the first coefficient has the form
    \begin{center}
    $a_1(K)=-\op{St}(K)t+(\frac{\op{J}^+(K)}{2}+\operatorname{St}(K))t^3\in\mathbb{Z}[t]$.
    \end{center}
    \item the polynomial $t^{-1}A(L)$ is multiplicative with respect to the connected sum. Equivalently, we have $tA(L_1\# L_2)=A(L_1)A(L_2)$.
\end{enumerate}
\end{theorem}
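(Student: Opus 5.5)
Since Theorem \ref{theorem1.2} says $A$ is well-defined, every item can be checked by evaluating both sides on some standard curves and comparing how they change under the moves encoded by the skein relations. The plan is to reduce each statement to the relations R0--R3', I, D, working order by order in $h$.

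\textbf{Item (1).} Set $h=0$. Then R2, R3, R3' say $A$ mod $h$ is unchanged under self-tangency and triple point perestroikas, and R0 says a contractible component moves freely. So $a_0$ is an invariant of regular homotopy with components allowed to pass through each other. By Whitney's theorem each component can be deformed to a standard curve with rotation number $r$: a circle with $|r|-1$ small kinks (or a figure-eight when $r=0$). Modulo $h$, D lets us separate the components. R1 removes the kinks, and the figure-eight resolves similarly. Each kink removal contributes $t^{-1}$, but tracking it against I, each component contributes exactly $t$. Hence $a_0(L)=t^{n(L)}$.

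\textbf{Items (2) and (3).} Write $A=t^{n}+a_1h+O(h^2)$ and extract the jump of $a_1$ across each stratum.
\begin{itemize}
\item Across a direct self-tangency, R2 gives a jump of $t\cdot a_0(\text{smoothing})$. The smoothing has component number $n\pm1$, so the jump is $t^{n\pm1+1}$.
\item Across an inverse self-tangency, R2' gives no jump.
\item Across a triple point, R3 or R3' gives a jump equal to the difference of the $a_0$ values of the two resolved diagrams. These are again powers of $t$, determined by how the resolutions split components.
\end{itemize}
For a one-component $K$ these jumps are the following.
\begin{itemize}
\item At a direct tangency, the smoothing changes the component count by one, giving $t^3$. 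This matches a $\Delta J^+=2$ contribution of $t^3\cdot\tfrac12 J^+$.
\item At a positive triple point, the two resolutions have $3$ and $1$ components. The jump $t^3-t$, or its $t^{-1}h$ analogue, matches $\Delta\op{St}=1$ in $\op{St}(t^3-t)$.
\end{itemize}
So $a_1(K)-(-\op{St}\,t+(\tfrac{J^+}{2}+\op{St})t^3)$ is constant on each regular homotopy class. It then suffices to check equality on Arnold's standard curves $K_i$, where $\op{St}(K_i)=i-1$ and $J^+(K_i)=-2(i-1)$. These are computed directly from R1 and R0 with the winding-number weights $(1+h)^k$.

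Item (2) follows the same pattern: compare jumps with Viro's definition of $J^+_{\op{Vir}}$, and check normalizations on disjoint unions of standard curves using D.

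\textbf{Item (4).} I would argue that the computation of $A(L_1\#L_2)$ can be performed inside a neighbourhood of $L_1$ while $L_2$ is shrunk to a small region near the connecting band. Every skein relation applied to $L_1$ is local and equally valid in the connected sum, and the same holds for $L_2$. By induction on the complexity, using Theorem \ref{theorem1.2}, both sides satisfy the same recursion. The base case is $L_i$ trivial, where $K\#O=K$ and $A(O)=t$.

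The main obstacle is the winding-number factors $(1+h)^k$ from R0. These depend on the region where the other summand sits, so for both item (4) and the normalization in items (2) and (3) one must check that the connected sum is placed in the unbounded region. Then the winding numbers, and hence the factors, are unaffected.
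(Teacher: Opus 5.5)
Items (1)--(3) follow essentially the paper's route. You read off the jumps of $a_0$ and $a_1$ from R2, R2$'$, R3, R3$'$, compare them with Arnold's jumps, and then fix constants. Packaging (3) as ``the difference has no jumps, so evaluate it on standard curves'' is fine.

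The genuine gap is item (4). Your sentence that the computation of $A(L_1\#L_2)$ ``can be performed inside a neighbourhood of $L_1$ while $L_2$ is shrunk'' is the whole content of the claim, and it is not justified.

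\begin{itemize}
\item Evaluating $A(L_1)$ by the relations needs regular homotopies of $L_1$ and R0-moves that carry split-off circles to infinity. Nothing guarantees these avoid the site where $L_2$ is attached, or keep that site in the unbounded region.
\item Sliding the $L_2$ summand past an arc of $L_1$ is not one of the relations. It creates crossings with $L_2$ and produces R2/R3 correction terms.
\item Placing $L_2$ in the unbounded region, which you single out as the main obstacle, only takes care of the R0 factors.
\item ``Induction on complexity'' has to be made precise. Regular homotopies can raise the crossing number, so the workable induction is $h$-adic, using that the correction terms carry a factor $h$.
\end{itemize}

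A skein-theoretic repair needs a relative statement. Delete a disk containing the band and $L_2$, together with an arc from it to $\infty$ in the unbounded region; what remains is a disk $\Omega$. Then show that the relations applied inside $\Omega$, with I used next to the deleted arc, reduce every $1$--$1$ tangle in $\Omega$ to $\lambda$ times the trivial arc. This would use Whitney's theorem for arcs rel endpoints and $h$-adic induction. It gives $A(L_1)=\lambda t$ and $A(L_1\#L_2)=\lambda A(L_2)$, hence $tA(L_1\#L_2)=A(L_1)A(L_2)$.

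The paper avoids all of this by using the state sum of Definition~\ref{definition2.1}. Every state of $L_1\#L_2$ is a connected sum of states, so $qt\,[L_1\#L_2]=[L_1][L_2]$. Moreover $\eta$ is additive and $r$ drops by one, so $s(L_1\#L_2)=s(L_1)+s(L_2)+1$, and the identity follows in one line.

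Smaller points:
\begin{itemize}
\item Arnold's values are $\op{St}(K_0)=J^+(K_0)=0$. Your formulas $i-1$ and $-2(i-1)$ hold only for $i\ge1$, and with them the check at $K_0$ fails, since $A(K_0)=t$.
\item The sign of the exponent in R0 is easy to reverse. The state sum gives $A(K_{i+1})=t(1+h)^{-i}$, so $a_1(K_{i+1})=-i\,t$.
\item In item (2) the normalization check is not a formality. By D, $a_1|_{t=1}$ is additive under disjoint union; for example $a_1(O\sqcup O)=0$.
\item Carrying out your argument for (2) gives $a_1(L)|_{t=1}=\tfrac12\bigl(s(L)+c(L)+n(L)\bigr)$. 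This equals $\tfrac12J^+_{\op{Vir}}$ only if $J^+_{\op{Vir}}$ is normalized additively. With $J^+_{\op{Vir}}=s+c+1$, as in the paper's remark after Proposition~\ref{prop B sum}, it already fails for $O\sqcup O$. The paper's own proof of (2) does not address this either.
\end{itemize}
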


If we write $A(L)$ as a power series in the variable $t$, let us use $B(L)$ denote the coefficient of $t$. In other words, $B(L)=\frac{A(L)}{t}|_{t=0},$ here we use the fact that $t|A(L)$, which can be concluded from the relations above by taking $t=0$. For the polynomial $B(L)\in\mathbb{Z}[h]$, we have the following results.

\begin{theorem}\label{thm B}
Let $L$ be a plane curve with $n(L)$ components, then we have
  \begin{enumerate}
    \item the invariant $B(L)=\sum_{k\ge 0}b_k(L)h^k$ is of $\operatorname{St}$-type, i.e. it is invariant under direct self-tangency perestroika and inverse self-tangency perestroika; 
    \item the invariant $B(L)$ is multiplicative with respect to the connected sum;
    \item for a one-component curve $K$, $b_0(K)=1, b_1(K)=-\operatorname{St}$. Moreover, the $k$-th coefficient $b_k(K)$ is an $\op{St}$-type invariant of finite order $k$.
    \item the invariant $B(L)=0$ if $L$ is splittable, i.e. it is a disjoint union of two plane curves;
    \item the polynomial $B(L)$ is divisible by $h^{n(L)-1}$. Moreover, this estimate of $h$-order is sharp when $n(L)$ is odd.
  \end{enumerate}
\end{theorem}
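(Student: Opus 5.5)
We derive all five parts of Theorem \ref{thm B} from the skein relations R0--R3$'$, I, D, together with Theorems \ref{theorem1.2} and \ref{thm A}. The key device is to specialize at $t=0$ after dividing by $t$. Write $\bar A(L)=A(L)/t$, which makes sense because $t\mid A(L)$, so that $B(L)=\bar A(L)|_{t=0}$.

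\textbf{Parts (1) and (2).} Divide relation R2 by $t$. The correction term becomes $h\,\bar A(\cdot)\,t$, so it vanishes at $t=0$. Hence $B$ is unchanged under a direct self-tangency. Relation R2$'$ is already an equality, so $B$ is unchanged under an inverse self-tangency as well. Together these give (1).

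For (2), we use Theorem \ref{thm A}(4), namely $tA(L_1\# L_2)=A(L_1)A(L_2)$. Rewriting in terms of $\bar A$ gives $\bar A(L_1\#L_2)=\bar A(L_1)\bar A(L_2)$. Setting $t=0$ yields the multiplicativity of $B$.

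\textbf{Part (4).} By relation D we have $A(L_1\sqcup L_2)=A(L_1)A(L_2)$, and each factor is divisible by $t$. Thus $\bar A(L_1\sqcup L_2)=t\,\bar A(L_1)\bar A(L_2)$, which vanishes at $t=0$. So $B(L)=0$ whenever $L$ is splittable.

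\textbf{Part (3).} Theorem \ref{thm A}(1) gives $a_0(K)=t$, so $b_0(K)=1$. Theorem \ref{thm A}(3) gives $a_1(K)/t=-\op{St}(K)+(\cdots)t^2$, so $b_1(K)=-\op{St}(K)$. That $b_k$ is of $\op{St}$-type follows from (1).

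For finite order $k$, we argue with R3 and R3$'$. The jump of $B$ across a triple point is $h$ times (or $t^{-1}h$ times) a difference of two smoothed curves. This difference is again a jump of the same kind, since the two smoothed diagrams differ by a local move. Iterating, we expect the $(k+1)$-st order jump of $B$ to be divisible by $h^{k+1}$. Its coefficient of $h^k$ therefore vanishes, so $b_k$ is of order $\le k$. Two things need checking here. First, $t^{-1}$ factors must not spoil the evaluation at $t=0$; this should follow from the extra component created by the smoothing. Second, we must make precise what "order" means for triple-point singular curves. This bookkeeping is the main obstacle of the proof, and I would set it up via resolutions of curves carrying several triple points, as in Vassiliev theory.

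\textbf{Part (5).} We induct on the number of crossings between distinct components. If no two components meet, $L$ is splittable, so $B(L)=0$ by (4), which is divisible by any power of $h$. Otherwise, we use the $\op{St}$-invariance from (1) and the skein relations to pull components apart. Each application of R3/R3$'$ that reduces the connectivity between components contributes a factor of $h$, and R0 contributes only units. The combinatorial claim to establish is that separating $n$ components requires at least $n-1$ factors of $h$. One way to see this is that the connectivity graph of the components must lose $n-1$ edges, each costing one $h$.

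For sharpness when $n$ is odd, I would compute explicit examples. Candidates are $n$ concentric-type curves chained in a cyclic or necklace configuration. For these, the lowest-order term should be computable by repeated connected sums of a two- or three-component building block, using (2), with a nonzero leading coefficient. The parity restriction suggests using a three-component block whose $B$ has leading term $ch^2$ with $c\neq0$, then taking connected sums to reach every odd $n$.
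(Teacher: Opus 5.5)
Parts (1), (2), (4) and the values $b_0=1$, $b_1=-\op{St}$ in (3) are handled exactly as in the paper. There are two gaps: the main one is in part (5), and a smaller one is in part (3).

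\textbf{Part (5).} Your bookkeeping ``one factor of $h$ per edge of the connectivity graph'' fails at a triple point whose three strands lie on three distinct components. The correction diagrams in R3 are the single-crossing pictures $s_1$ and $s_2$ on three strands. If the outside closes $s_1s_2s_1$ into three separate components, then the outside permutation is $(13)$. Since $(13)(12)$ and $(13)(23)$ are $3$-cycles, both corrections fuse those three components into one. The crossingless cores in R3$'$ behave the same way. Hence the jump is $h\bigl(B(D_1)-B(D_2)\bigr)$ with $n(D_1)=n(D_2)=n-2$, and your induction only gives $h\cdot h^{n-3}=h^{n-2}$.

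You cannot choose the homotopy to avoid such triple points. For components $A,B,C$, the quantity $\sum_{p\in A\cap B}\operatorname{sign}(p)\,w_C(p)$ changes only at $ABC$ triple points. It vanishes when the three components are pairwise disjoint, and equals $\pm1$ on the Borromean shadow. For $n=3$ you can recover the missing factor, because $b_0(D_1)=b_0(D_2)=1$. For $n\ge5$, however, you would need $b_{n-3}(D_1)=b_{n-3}(D_2)$, and nothing in your argument provides this.

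The paper does not use skein induction here. For even $n$, $B(L)=0$ outright, because every relation preserves the parity of the $t$-exponents (Remark~\ref{remark4.2}(1)). For odd $n$ and nonsplit $L$, the argument goes as follows.
\begin{enumerate}
\item Proposition~\ref{prop B sum} gives $B(L)=q^{\op{J}^+_{\op{Vir}}(L)}\overline{\nabla}(L)$, and the prefactor is a unit.
\item Each $\nabla(L_\epsilon)$ is written as $\pm$ a Kasteleyn determinant via Kauffman states.
\item Averaging over the lifts averages the rows. This produces a single matrix $M$ with entries $0,-1,r$, where $r=(x+x^{-1})/2$ and $r-1\sim h$.
\item At $r=1$, the full $c\times(c+2)$ matrix kills the constant vector and the winding-number vectors of the $n$ components. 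These are linearly independent when $L$ is nonsplit.
\item After deleting two columns, the kernel still has dimension at least $n-1$, so $(r-1)^{n-1}\mid\det M$.
\end{enumerate}
Your sharpness plan is the same as the paper's. The three-component block is the Borromean shadow $L_3$, the closure of $(s_1s_2)^3$, with $B(L_3)=h^2(1+h)^{-3}$ by Example~\ref{ex B}.

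\textbf{Part (3).} You only address order $\le k$. With the paper's definition, the statement claims order exactly $k$, which also requires a curve with $k$ singular triple points on which $b_k\neq0$. The paper takes the one-triple-point curve $K_s$ interpolating the closures of $s_1s_2s_1s_1$ and $s_2s_1s_2s_1$. For it, $B(K_s)=(1+h)^{-2}-(1+h)^{-3}=h(1+h)^{-3}$. Multiplicativity then gives $B(K_s^{\# k})=h^k(1+h)^{-3k}$, so $b_k=1$.

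For the upper bound, iterate over the distinct triple points using locality. Each triple point contributes a factor $h$ times genuine curves, whose $B$-values lie in $\mathbb{Z}[[h]]$. Your assertion that the two correction diagrams ``differ by a local move'' is false, since they can have different numbers of components. It is not needed for the argument, though.
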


We sketch the main idea of the proof of Theorem \ref{theorem1.2} here, instead of proving it directly, we provide another interpretation of $A(L)$. For a given plane curve with $c$ self-intersections, we first resolve each crossing into either an oriented smoothing or a positive crossing, which yields $2^c$ links in $\mathbb{R}^3$, then we use a variant of HOMFLY-PT polynomial to evaluate the resulting links. Finally, it is sufficient to check that the resulting polynomial satisfies the skein relations. We call this presentation of $A(L)$ the {\em state sum} formula. Note that Chmutov, Goryunov and Murakami used the HOMFLY-PT polynomial to study Legendrian lifts of immersed plane curves in \cite{CGM2000}.

This state sum formula induces a similar state sum formula for the polynomial $B(L)$. Roughly speaking, $B(L)$ is the average of the Conway polynomials of the $2^c$ liftings of $L$ with an extra normalization factor, see Proposition \ref{prop B sum}. Polyak considered a similar average of Vassiliev knot invariants in \cite[Section 6]{Polyak1998}. We focus on the special case of the Conway polynomial, and explore the skein relations of the corresponding invariant.

\begin{remark}
Wang considered a relation similar to R3 in the study of spin Hecke algebras \cite{Wang2007}. 
\end{remark}

We finally point out some further directions:
\begin{enumerate}
\item It is natural to consider invariants of plane curves as state sums of other link invariants, and to see if there are other interesting skein relations. 
\item The polynomial invariants admit state sum expressions in terms of link polynomials. We can further expand the link polynomial into its state sum formula. As a result, we should obtain a state sum in terms of chord diagrams for our invariant of plane curves.
\item Various link polynomials lead to skein modules of 3-manifolds. We can definitely consider a similar module for 2-manifolds. Is such a construction also related to quantization? 
\item It is expected that the skein relations in this paper have a curve counting interpretation via Floer theory of symplectic orbifolds. 
\end{enumerate}

\noindent{\em Organization:} In Section \ref{section2}, we give the state sum formula of $A(L)$. Based on it, we give a proof of Theorem \ref{theorem1.2}. Section \ref{section3} contains all the skein relations of the polynomial invariant $B(L)$. The state sum of it is also discussed. In Section \ref{section4}, we discuss the properties of our invariants and their relations to Arnold's invariants. Theorem \ref{thm A} and Theorem \ref{thm B} are also proved.

\vspace{.2cm}
\noindent {\bf Acknowledgements:} Y.T. and T.Y. thank Ko Honda for valuable discussions. The authors used generative AI to assist with the computation in deriving the state sum formula for $A(L)$, and also used it as a discussion aid in developing the proof of Theorem \ref{thm B} (5).
Z.C. is partially supported by NSFC Grant No. 12371065, Y.T. is partially supported by NSFC Grant No. 12471064, T.Y. is partially supported by NSFC Grant No. 12501080.

\section{The state sum formula of $A(L)$}\label{section2}
We give the construction of the invariant $A(L)$ via resolution and the HOMFLY-PT polynomial in this section. Consider the following auxiliary coefficient ring 
\begin{gather} \label{eq ring}
\tilde{R}=\mathbb{Z}[t,h,(1+h)^{-1},q^{\pm 1},c,c']/(q^2=1+h, c+c'=-th, cc'=h).
\end{gather}
The construction of $A(L)$ consists of three steps:
\begin{enumerate}
\item resolve the crossings to obtain a collection of links in $\mathbb{R}^3$;
\item evaluate each link by a variant of HOMFLY-PT polynomial;
\item add a normalization.
\end{enumerate}

For each self-intersection ``$X$", consider the following two resolutions: the positive crossing ``$X_+$" and the oriented smoothing ``$I$". A local skein relation is given by
\begin{equation}
    \label{eq-skein}
    \big[\includegraphics[height=1.5cm,valign=c]{figures/skein/uux.eps}\big]\,=\,q\big[\includegraphics[height=1.5cm,valign=c]{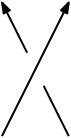}\big]\,-\,c\,\big[\,\includegraphics[height=1.5cm,valign=c]{figures/skein/id.eps}\,].
\end{equation}
Let $L$ be a plane curve with $c(L)$ self-intersections. After resolving all the self-intersections we obtain $2^{c(L)}$ links $L_{\epsilon}$ in $\mathbb{R}^3$. For each link $L_{\epsilon}$, we use $[L_{\epsilon}]$ to denote its framed HOMFLY-PT polynomial evaluated in $\mathbb{Z}[a^{\pm 1},z^{\pm 1}]$, which is defined by the following skein relations: 
\begin{equation} \label{eq-homfly}
    \includegraphics[height=1.5cm,valign=c]{figures/skein/poscross.eps}\,-\,\includegraphics[height=1.5cm,valign=c]{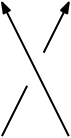}\,=\,z\,\includegraphics[height=1.5cm,valign=c]{figures/skein/id.eps},\qquad \includegraphics[height=1.5cm,valign=c]{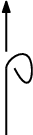}\,=\,a\,\includegraphics[height=1.5cm,valign=c]{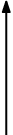},\qquad \includegraphics[height=1.5cm,valign=c]{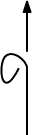}\,=\,a^{-1}\,\includegraphics[height=1.5cm,valign=c]{figures/skein/id1.eps},
\end{equation}
and the evaluation on the unknot $O$ is $[O]=\frac{a-a^{-1}}{z}$. Here the variables $z$ and $a$ satisfy the following equations:
\begin{gather} \label{eq act}
a=1+ct, \quad a^{-1}=1+c't, \quad qz=c-c'.
\end{gather}
It follows that $[O]=\frac{a-a^{-1}}{z}=(c-c')t=qt$ and $$[L]=\sum_{\epsilon} q^{x(\epsilon)}(-c)^{i(\epsilon)}[L_{\epsilon}],$$
where $x(\epsilon)$ and $i(\epsilon)$ denote the numbers of $X_+$-resolutions and $I$-resolutions of the state $\epsilon$, respectively.   

Finally, we add a normalization factor as follows. Let $\tilde{L}$ be a collection of disjoint circles obtained from $L$ by applying smoothing at every self-intersection along the orientation. Denote the components of $\tilde{L}$ by $C_1(L), \dots, C_{r(L)}(L)$. Note that $r(L)$ is nothing but the number of Seifert circles for any $L_{\epsilon}$. Set $w_i=1$ if $C_i$ is counterclockwise oriented and $w_i=-1$ if $C_i$ is clockwise oriented. Define $$\eta(L)=-\sum_{C_j \subset \op{int}(C_i)}w_iw_j,$$ 
here the sum takes over all the pairs $\{C_i, C_j\}$ such that one contained within another, and we set $$s(L)=2\eta(L)-r(L).$$

\begin{definition}\label{definition2.1}
For a generic plane curve $L$, we define $A(L)=q^{s(L)}[L] \in \tilde{R}$.
\end{definition}

We call this the {\em state sum} formula of $A(L)$. Next we claim that this definition actually coincides with that one we give in Section \ref{section1}. In order to prove this, it suffices to prove that the state sum formula satisfies all the skein relations in terms of $t$ and $h$. After that, we will find that $A(L)$ actually takes values in $R=\mathbb{Z}[t, h]$. Before proving Theorem \ref{theorem1.2}, we give the following simple but useful lemma.

\begin{lemma}\label{lemma2.2}
The local relation $[X]=q[X_-]-c'[I]$ holds, where $X_-$ is obtained from $X$ by replacing the self-intersection with a negative crossing.
\end{lemma}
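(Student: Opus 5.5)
We start from the defining relation \eqref{eq-skein}, $[X]=q[X_+]-c[I]$, and substitute for $[X_+]$ using the HOMFLY-PT skein relation \eqref{eq-homfly}. Locally at the crossing this gives $[X_+]=[X_-]+z[I]$. Hence
\[
[X]=q[X_-]+(qz-c)[I].
\]
The claim $[X]=q[X_-]-c'[I]$ is then equivalent to $qz-c=-c'$, that is, $qz=c-c'$. This is exactly the third relation imposed in \eqref{eq act}, so the lemma follows.

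Two points need care. First, the HOMFLY-PT skein relation must be applied as a local identity inside the state sum. The bracket $[\,\cdot\,]$ on the plane-curve side is defined by expanding every self-intersection. Fix one crossing $X$ and group the states $\epsilon$ according to the resolutions at the other crossings. For each such partial state, the three links obtained by placing $X_+$, $X_-$ or $I$ at the fixed crossing agree outside a small ball. On these triples the framed HOMFLY-PT skein relation holds. Summing over the partial states with the common weights $q^{x}(-c)^{i}$ from the other crossings, the identity passes to the state sum.

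Second, the orientation and picture conventions must match: the $X_+$ and $X_-$ resolutions of the planar crossing must be exactly the positive and negative crossings that appear in \eqref{eq-homfly}, with the same oriented smoothing $I$. I expect this to be the only real obstacle, and it is bookkeeping rather than mathematics. Once the conventions are checked, the relation $qz=c-c'$ (equivalently $q(a-a^{-1})/z=(c-c')t$, consistent with $[O]=qt$) completes the argument.
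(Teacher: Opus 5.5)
Your proof is correct and is the paper's own argument: substitute $[X_+]=[X_-]+z[I]$ into $[X]=q[X_+]-c[I]$, then use $qz=c-c'$ to get the coefficient $qz-c=-c'$. Your remark on applying the skein relation locally inside the state sum is a valid justification that the paper leaves implicit.
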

\begin{proof}
According to the HOMFLY-PT relation $[X_+]-[X_-]=z[I]$, we have
\begin{center}
$[X]=q[X_+]-c[I]=q([X_-]+z[I])-c[I]=q[X_-]+(qz-c)[I]=q[X_-]-c'[I]$.
\end{center}
\end{proof}

Now we give the proof of Theorem \ref{theorem1.2}.
\begin{proof}
\newcommand{\proofpic}[2][1.5cm]{\includegraphics[height=#1,valign=c]{#2}}
We show that $A(L)$ defined by the state sum formula satisfies the skein relations R1, R2, R2', R3, R3', R0, I and D case by case. First we gather the following relations
\begin{align} \label{eq var}
q^2=1+h, c+c'=-th, cc'=h, qz=c-c', a=1+ct, a^{-1}=1+c't,
\end{align}
together with $[O]=qt$. All diagrammatic calculations below are about the HOMFLY-PT state sum.

\noindent(R1)
We first compute its HOMFLY-PT state sum:
\[
\begin{aligned}
\proofpic{figures/skein/cr.eps}
&=q\,\proofpic{figures/skein/curlr.eps}
  -c\,\proofpic{figures/skein/unc.eps}
 =(qa-cqt)\,\proofpic{figures/skein/id1.eps}  \\
&=q\,\proofpic{figures/skein/id1.eps}
 =t^{-1}\,\proofpic{figures/skein/unc.eps}
\end{aligned}
\]
The other case of R1 can be verified similarly. Since the normalization exponent is unchanged, this proves that $A(L)$ satisfies the skein relation R1.

\noindent(R2)
We first compute its HOMFLY-PT state sum and use Lemma \ref{lemma2.2}:
\[
\proofpic{figures/skein/uuxx.eps}
={}q^2\,\proofpic{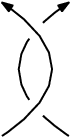}
 -qc\,\proofpic{figures/skein/negcross.eps}
 -qc'\,\proofpic{figures/skein/poscross.eps}
 +cc'\,\proofpic{figures/skein/id.eps}
\]
The HOMFLY-PT skein relation implies that
\[
\begin{aligned}
\proofpic{figures/skein/uuxx.eps}
={}&(q^2+qcz+cc')\,\proofpic{figures/skein/id.eps}
 -q(c+c')\,\proofpic{figures/skein/poscross.eps}\\
={}&(q^2+c^2)\,\proofpic{figures/skein/id.eps}
 -q(c+c')\,\proofpic{figures/skein/poscross.eps}\\
={}&(1-thc)\,\proofpic{figures/skein/id.eps}
 +qth\,\proofpic{figures/skein/poscross.eps}\\
={}&\proofpic{figures/skein/id.eps}
 +th\,\proofpic{figures/skein/uux.eps}
\end{aligned}
\]
Here we are using $q^2+c^2=1-thc, c+c'=-th$ and Lemma \ref{lemma2.2}.  The normalization exponent is unchanged, this proves the case of R2.

\noindent(R2')
Let
\[
C=\proofpic{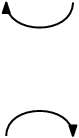},
\qquad
\proofpic{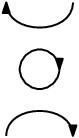}=qtC.
\]
The HOMFLY-PT state sum gives
\[
\begin{aligned}
\proofpic{figures/skein/udxx.eps}
={}&q^2\,\proofpic{figures/skein/ud.eps}
 -qca^{-1}C-qc'aC+cc'qtC\\
={}&q^2\,\proofpic{figures/skein/ud.eps}
 -q(ca^{-1}+c'a-cc't)C
 =q^2\,\proofpic{figures/skein/ud.eps},
\end{aligned}
\]
because
$ca^{-1}+c'a-cc't=c(1+c't)+c'(1+ct)-cc't=c+c'+cc't=-th+ht=0$.
A direct comparison of the complete smoothings gives
\[
s(\proofpic{figures/skein/udxx.eps})
=s(\proofpic{figures/skein/ud.eps})-2.
\]
The factor $q^2$ is canceled by the normalization factor. The other case of R2' can be proved in the same way.  

\noindent(R3)
Set
\[
P_1=\proofpic{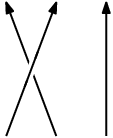},\quad
P_2=\proofpic{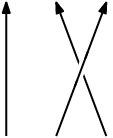},\quad
U_1=\proofpic{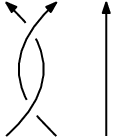},\quad
U_2=\proofpic{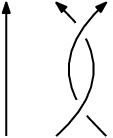}.
\]
The HOMFLY-PT relation gives $U_1-U_2=z(P_1-P_2)$. We also have
\[
\proofpic{figures/skein/uuuxi.eps}
-\proofpic{figures/skein/uuuix.eps}
=q(P_1-P_2).
\]
Expanding the three crossings and canceling the isotopic state pairs, we
obtain
\[
\begin{aligned}
\proofpic{figures/skein/uuul.eps}
 -\proofpic{figures/skein/uuur.eps}&=-q^2c(U_1-U_2)+qc^2(P_1-P_2)\\
&=(-q^2cz+qc^2)(P_1-P_2)\\
&=qcc'(P_1-P_2)\\
&=qh(P_1-P_2)\\
&=h\left(\proofpic{figures/skein/uuuxi.eps}-\proofpic{figures/skein/uuuix.eps}\right).
\end{aligned}
\]
These four plane diagrams have the same normalization factors.  This proves the case of R3.

\noindent(R3')
Remove the detached circle from the two correction diagrams and put
\[
\Delta=
\proofpic{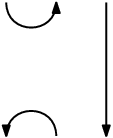}
-\proofpic{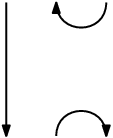}.
\]
Since the circle in HOMFLY-PT has value $qt$,
\[
\proofpic{figures/skein/dudc.eps}
-\proofpic{figures/skein/dudcl.eps}=-qt\Delta.
\]
Expand the eight states on each side, and let $E_k$ be the contribution
to their difference from the states having exactly $k$ $I$-resolutions so that
\newcommand{\rpp}[1]{\proofpic[1.4cm]{figures/proof/r3p-#1.eps}}%
\[
\proofpic{figures/skein/dudl.eps}
-\proofpic{figures/skein/dudr.eps}
=E_0+E_1+E_2+E_3.
\]
These four resolutions count are evaluated as follows.  The $E_0$ pair is
reduced by two HOMFLY-PT relations and a positive framed curl.  Each of
the three $E_1$ pairs uses one HOMFLY-PT relation and one positive curl;
the $E_2$ pairs are positive curls on the two core diagrams; and the
$E_3$ pair is the detached circle on the two core diagrams.  Consequently,
\[
\begin{aligned}
E_0
&=q^3\bigl(\rpp{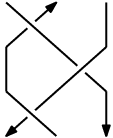}-\rpp{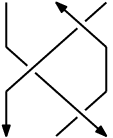}\bigr)
 =q^3z^2a\Delta
 =qa(c-c')^2\Delta,\\
E_1&=-q^2c\Bigl(
 \bigl(\rpp{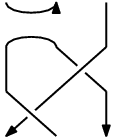}-\rpp{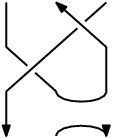}\bigr)
 +\bigl(\rpp{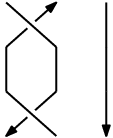}-\rpp{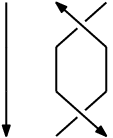}\bigr)\\
&+\bigl(\rpp{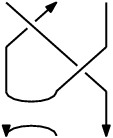}-\rpp{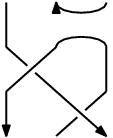}\bigr)\Bigr)
 =-3q^2cza\Delta
 =-3qca(c-c')\Delta,\\
E_2&=qc^2\Bigl(
 \bigl(\rpp{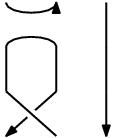}-\rpp{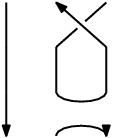}\bigr)
 +\bigl(\rpp{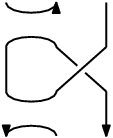}-\rpp{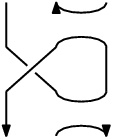}\bigr)\\
&+\bigl(\rpp{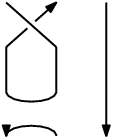}-\rpp{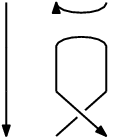}\bigr)\Bigr)
 =3qc^2a\Delta,\\
E_3
&=-c^3\left(
 \proofpic[1.4cm]{figures/skein/dudcl.eps}
 -\proofpic[1.4cm]{figures/skein/dudc.eps}\right)
 =-c^3qt\Delta.
\end{aligned}
\]
Collecting the four cases and using $qz=c-c'$ now gives
\[
\begin{aligned}
E_0+E_1+E_2+E_3
&=q\big(a(c-c')^2-3ac(c-c')+3ac^2-c^3t\big)\Delta\\
&=q\big(a(c^2+cc'+c'^2)-c^3t\big)\Delta\\
&=-qcc'\Delta\\
&=-qh\Delta.
\end{aligned}
\]
For the last equality, observe that
\[
a(c^2+cc'+c'^2)-c^3t+cc'
=(c+c')\big((c+c')+tcc'\big)=0.
\]
Consequently,
\[
\proofpic{figures/skein/dudl.eps}
-\proofpic{figures/skein/dudr.eps}
=t^{-1}h(
\proofpic{figures/skein/dudc.eps}
-\proofpic{figures/skein/dudcl.eps}).
\]
All these four diagrams have the same normalization factors, so the proof of R3' case is finished.

\noindent(R0)
The HOMFLY-PT evaluations on the two sides are equal, while the exponent $s$ of the left-hand diagram is two larger than
that of the right-hand diagram.  Thus the factor is
$q^2=1+h$.

\noindent(I)
We evaluate on the trivial circle $O$, $$A(O)=q^{-1}[O]=q^{-1}\frac{a-a^{-1}}{z}=\frac{ct-c't}{qz}=\frac{qzt}{qz}=t.$$

\noindent(D)
Both the HOMFLY-PT state sums and the normalization factors are multiplicative under the operation of disjoint union. This proves that $A(L_1\cup L_2)=A(L_1)A(L_2)$.
\end{proof}

\section{The polynomial invariant $B(L)$}\label{section3}
For a plane curve $L$, recall that $B(L)\in\mathbb{Z}[h]$ is defined as $B(L)=\frac{A(L)}{t}|_{t=0}$. In other words, $A(L)=B(L)t+\text{higher-order terms in }t$. We first list all the skein relations of the polynomial invariants $B(L)$, which directly follow from those of $A$.

\begin{enumerate}[leftmargin=2.5em,labelsep=0.8em]

  \item[R1]
  \makebox[\linewidth][c]{%
  \(
  \includegraphics[height=1.5cm,valign=c]{figures/skein/cr.eps}^k
  =
  (1+h)^k\,
  \includegraphics[height=1.5cm,valign=c]{figures/skein/id1.eps},
  \qquad
  \includegraphics[height=1.5cm,valign=c]{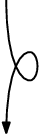}^k
  =
  (1+h)^{-k}\,
  \includegraphics[height=1.5cm,valign=c]{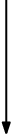}
  \)
  }

  ~\\
  \item[R2]
  \makebox[\linewidth][c]{%
  \(
  \includegraphics[height=1.5cm,valign=c]{figures/skein/uuxx.eps}
  \,=\,
  \includegraphics[height=1.5cm,valign=c]{figures/skein/id.eps}
  \)
  }

  ~\\
  \item[R2']
  \makebox[\linewidth][c]{%
  \(
  \includegraphics[height=1.5cm,valign=c]{figures/skein/udxx.eps}
  =
  \includegraphics[height=1.5cm,valign=c]{figures/skein/ud.eps},
  \qquad
  \includegraphics[height=1.5cm,valign=c]{figures/skein/duxx.eps}
  =
  \includegraphics[height=1.5cm,valign=c]{figures/skein/du.eps}
  \)
  }

  ~\\
  \item[R3]
  \makebox[\linewidth][c]{%
  \(
  \includegraphics[height=1.5cm,valign=c]{figures/skein/uuul.eps}
  -
  \includegraphics[height=1.5cm,valign=c]{figures/skein/uuur.eps}
  \,=\,
  h
  \left(
  \includegraphics[height=1.5cm,valign=c]{figures/skein/uuuxi.eps}
  -
  \includegraphics[height=1.5cm,valign=c]{figures/skein/uuuix.eps}
  \right)
  \)
  }

  ~\\
  \item[R3']
  \makebox[\linewidth][c]{%
  \(
  \includegraphics[height=1.5cm,valign=c]{figures/skein/dudl.eps}
  -
  \includegraphics[height=1.5cm,valign=c]{figures/skein/dudr.eps}
  \,=\,
  h
  \left(
  \includegraphics[height=1.5cm,valign=c]{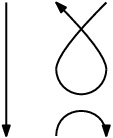}
  -
  \includegraphics[height=1.5cm,valign=c]{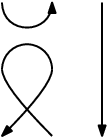}
  \right)
  \,=\,
  h
  \left(
  \includegraphics[height=1.5cm,valign=c]{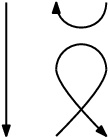}
  -
  \includegraphics[height=1.5cm,valign=c]{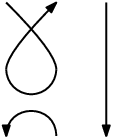}
  \right)
  \)
  }

  ~\\
  \item[I]
  \makebox[\linewidth][c]{%
  \(
  {
  \includegraphics[height=0.7cm,valign=c]{figures/skein/sc.eps}
  }^{*}
  \,=\,
  {
  \includegraphics[height=0.7cm,valign=c]{figures/skein/ncs.eps}
  }^{*}
  \,=\,
  1
  \)
  }

  ~\\
  \item[D]
  \makebox[\linewidth][c]{%
  B\((L_1\sqcup L_2)=0\)}
\end{enumerate}
In Relation R1, the integer $k$ denotes the winding number of the plane curve about the region marked with the letter $k$. It changes by one when crossing an arc, see Figure \ref{fig: id1_label}. The relation R3' is derived from the relations R1 and R3' for the invariant $A(L)$.

\begin{figure}[ht]
    \centering
    \includegraphics[width=2cm]{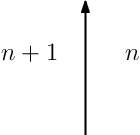}
    \caption{}   
    \label{fig: id1_label}
\end{figure}

It follows from R2 and R2' that the polynomial $B(L)$ is invariant under the two self-tangency perestroika, so that it is an invariant of $\op{St}$ type.  Next we give the construction of the polynomial invariant $B(L)$ via resolutions. The state sum formula of $A(L)$ uses the HOMFLY-PT polynomial. Next we show that a similar state sum for $B(L)$ can be obtained via the Conway polynomial. 

Recall that $B(L)=\frac{A(L)}{t}|_{t=0}$. The relations between variables in Equation \ref{eq var} are simplified by setting $t=0$:
\begin{align} \label{eq var2}
q^2=1+h,\qquad c+c'=0,\qquad c^2=-h, \qquad qz=2c,\qquad a=1.
\end{align}
Now Lemma \ref{lemma2.2} implies that 
$$[X]=\frac{1}{2}(q[X_+]-c[I]+q[X_-]-c'[I])=\frac{q}{2}([X_+]+[X_-]),$$
since $c+c'=0$.
The $a=1$ specialization of the HOMFLY-PT polynomial skein relations (\ref{eq-homfly}) coincide with that of the Conway polynomial $\nabla$.
The variable $z$ in $\nabla$ is related to $h$ via 
\begin{gather} \label{eq z}
z^2=\frac{4c^2}{q^2}=\frac{-4h}{1+h}.
\end{gather}
The following expression of $B(L)$ is mainly derived from the state sum formula of $A(L)$.

\begin{proposition}\label{prop B sum}
The invariant $B(L)=q^{s(L)+c(L)+1}\overline{\nabla}(L)$, where $$\overline{\nabla}=2^{-c(L)}\sum_{\epsilon \in \{+, -\}^{c(L)}}\nabla(L_{\epsilon})$$ 
is the average of the Conway polynomials of the $2^{c(L)}$ lifts of $L$. Here $c(L)$ denotes the number of self-intersections of $L$.
\end{proposition}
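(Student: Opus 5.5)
The plan is to start from the state sum of Definition \ref{definition2.1}, $A(L)=q^{s(L)}[L]$ with $[L]=\sum_\epsilon q^{x(\epsilon)}(-c)^{i(\epsilon)}[L_\epsilon]$, and specialize at $t=0$. The work is to divide by $t$ correctly and to check that the specialization is compatible with the relations (\ref{eq var2}).

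First I symmetrize the local resolution at each crossing. Equation (\ref{eq-skein}) gives $[X]=q[X_+]-c[I]$, and Lemma \ref{lemma2.2} gives $[X]=q[X_-]-c'[I]$. Averaging the two expressions crossing by crossing, and expanding all $c(L)$ crossings this way, I get
\[
[L]=2^{-c(L)}\sum_{\text{states}}\Bigl(\prod q\ \text{or}\ (-c)\ \text{or}\ (-c')\Bigr)[\text{resolution}],
\]
which is valid in $\tilde R[1/2]$ for every $t$. The sum of the $I$-coefficients per crossing is $-(c+c')=th$. So every state containing at least one smoothing carries a factor of $t$ from its coefficient.

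The next step is to track the powers of $t$. The HOMFLY value $[L']$ of any link with $m$ components has a pole of order $m-1$ in $z$. More usefully, each component contributes a factor $(a-a^{-1})/z=qt$. So $[L']$ is divisible by $t^{m'}$, where $m'$ is at least 1; I will get this from the fact that the HOMFLY skein relations with $a-a^{-1}=(c-c')t$ keep every link value divisible by $t$. This divisibility is best seen by evaluating through a skein tree down to unlinks, where each crossing change costs $z$ and unlinks give $(qt)^m$. The subtle point is that $z$ is not small, so I will argue directly from the relation: a single $t$ always survives from the final unlink.

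Given that, each state with $k\ge1$ smoothings contributes $O(t^{k+1})$. These states therefore vanish after dividing by $t$ and setting $t=0$. Only the all-crossing states survive, giving
\[
B(L)=q^{s(L)}2^{-c(L)}q^{c(L)}\sum_{\epsilon}\frac{[L_\epsilon]}{t}\Big|_{t=0}.
\]

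The last step identifies $\frac{[L_\epsilon]}{t}\big|_{t=0}$ with $q\,\nabla(L_\epsilon)$. At $t=0$ we have $a=1$, so the HOMFLY skein relations (\ref{eq-homfly}) become the Conway skein relation, with the framing curls trivial. The unknot value $[O]/t=q$ is a common factor; it appears once per link, by linearity of the skein relation. Normalizing so that $\nabla(O)=1$, we have $[L_\epsilon]/t|_{t=0}=q\nabla(L_\epsilon)$ with $z$ as in (\ref{eq z}), which yields the exponent $s(L)+c(L)+1$.

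The main obstacle is the divisibility claim: that $[L']/t$ is regular at $t=0$ and that states with smoothings contribute strictly higher order. The difficulty is that $z=(c-c')/q$ is not $O(t)$, and the ring has hidden relations among $c$, $c'$ and $t$. I would first try an induction on crossing number and unlinking number using the HOMFLY skein relation. This shows $[L']\in t\cdot\tilde R$, because every unlink value $(qt)^m$ and every curl factor $a^{\pm1}$ preserves the ideal $(t)$. The coefficient $-(c+c')=th$ then supplies the extra factor of $t$.
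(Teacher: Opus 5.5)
Your proposal is correct and follows essentially the paper's argument. The paper also averages $[X]=q[X_+]-c[I]$ with Lemma \ref{lemma2.2}, so the smoothing coefficient is $-(c+c')/2$, which vanishes at $t=0$. It then identifies $[L_\epsilon]/t|_{t=0}$ with $q\nabla(L_\epsilon)$ by reducing to unlinks via the skein relation. Your explicit $t^{k+1}$ bookkeeping makes precise the divisibility $t\mid[L']$ that the paper uses only implicitly.

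In your last step, justify more sharply why the factor $q$ ``appears once''. An $m$-component unlink leaf of the skein tree contributes $a^{w}(qt)^m/t=a^{w}q^mt^{m-1}$, which vanishes at $t=0$ for $m\ge 2$. This matches the vanishing of $\nabla$ on split unlinks, which is exactly the fact the paper invokes.
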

\begin{proof}
According to Definition \ref{definition2.1}, we have
\begin{center}
$A(L)=q^{s(L)}[L]=q^{s(L)}\sum\limits_{\epsilon}q^{x(\epsilon)}(-c)^{i(\epsilon)}[L_{\epsilon}]$,
\end{center}
here $[L_{\epsilon}]$ denotes the HOMFLY-PY polynomial of $L_{\epsilon}$, which is defined by the skein relations (\ref{eq-homfly}). For a fixed $[L_{\epsilon}]$, by using the skein relations (\ref{eq-homfly}) it can be written as the linear combination of HOMFLY-PT polynomials evaluated on some unlinks. Since $[O]=qt$, $\frac{[L_{\epsilon}]}{t}$ can be obtained by renormalizing $[O]=q$. Note that here we essentially use the fact that the (unnormalized) Conway polynomial of an unlink with more than one component vanishes. According to the discussion above, now we have 
\begin{center}
$\frac{[L]}{t}|_{t=0}=(\frac{q}{2})^{c(L)}\sum\limits_{\epsilon \in \{+, -\}^{c(L)}}\nabla'(L_{\epsilon})$.
\end{center}
Here $\nabla'$ is the unnormalized Conway polynomial which evaluated on the unknot equals $q$. It follows that
\begin{center}
$B(L)=\frac{A(L)}{t}|_{t=0}=q^{s(L)}\frac{[L]}{t}|_{t=0}=q^{s(L)}q^{c(L)}2^{-c(L)}\sum\limits_{\epsilon \in \{+, -\}^{c(L)}}q\nabla(L_{\epsilon})=q^{s(L)+c(L)+1}\overline{\nabla}(L)$.
\end{center}
Here we use the fact that $\nabla'(L_{\epsilon})=q\nabla(L_{\epsilon})$, since $\nabla(O)=1$.
\end{proof}

\begin{remark}
Arnold's invariant $\op{J}^+$ was extended by Viro from one-component plane curves to $n$-component plane curves in \cite{Viro1996}, which is denoted by $\op{J}^+_{\text{Vir}}$ and equals to the exponent $s(L)+c(L)+1$ of $q$ in Proposition \ref{prop B sum}. In particular, for a one-component plane curve $K$, we have $s(K)+c(K)+1=\op{J}^+(K)$.
\end{remark}

\section{Some applications}\label{section4}
We discuss the properties of our invariants $A(L)$ and $B(L)$ and their connections to Arnold's invariants in this section.

\subsection{The invariant $A(L)$}
The first formal property is about orientation reversal. Let $\bar{L}$ denote the curve obtained by simultaneously reversing orientations of all components of $L$.
\begin{proposition}\label{proposition4.1}
The invariant $A(L)$ is invariant under orientation reversal, i.e. $A(\bar{L})=A(L)$.
\end{proposition}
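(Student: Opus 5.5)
The plan is to argue directly from the state sum formula of Definition \ref{definition2.1}, rather than from the skein relations of Section \ref{section1}. By Theorem \ref{theorem1.2}, the state sum computes the same invariant, so it suffices to show that each ingredient of $A(L)=q^{s(L)}\sum_{\epsilon}q^{x(\epsilon)}(-c)^{i(\epsilon)}[L_\epsilon]$ is unchanged when $L$ is replaced by $\bar L$.

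\textbf{Step 1: matching the states.} The self-intersections of $L$ and $\bar L$ are the same points of the plane, so the states $\epsilon$ are in bijection. At a self-intersection, reversing both strands does not change the sign of the crossing: the local sign is determined by the ordered pair of tangent vectors together with the over/under choice, and negating both vectors preserves it. So the $X_+$-resolution of $\bar L$ is the orientation reverse of the $X_+$-resolution of $L$. Likewise, the oriented smoothing of $\bar L$ is the reverse of the oriented smoothing of $L$. Hence $(\bar L)_\epsilon=\overline{L_\epsilon}$ as oriented framed links (blackboard framing), and $x(\epsilon)$, $i(\epsilon)$ are unchanged.

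\textbf{Step 2: the link invariant.} I would show that $[\,\overline{\mathcal L}\,]=[\mathcal L]$ for every framed link $\mathcal L$, i.e.\ that the framed HOMFLY-PT polynomial of \eqref{eq-homfly} is invariant under simultaneous reversal of all components. The argument is by the usual induction defining the polynomial.
\begin{itemize}
\item Reversing all strands in a skein triple $(\mathcal L_+,\mathcal L_-,\mathcal L_0)$ gives again a skein triple with the same roles, because crossing signs are preserved and the oriented smoothing reverses to the oriented smoothing.
\item A positive (resp.\ negative) framed curl stays a positive (resp.\ negative) curl, since writhe is preserved.
\item The unknot reverses to the unknot.
\end{itemize}
Thus $\mathcal L\mapsto[\overline{\mathcal L}]$ satisfies the same defining relations and initial value, and by uniqueness of the framed HOMFLY-PT polynomial it equals $[\mathcal L]$. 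Alternatively one can simply quote this standard fact.

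\textbf{Step 3: the normalization.} The Seifert smoothing $\tilde{\bar L}$ consists of the same circles $C_i$ with reversed orientations. So $r(\bar L)=r(L)$, every $w_i$ changes sign, and each product $w_iw_j$ is unchanged. The nesting relation ``$C_j\subset\op{int}(C_i)$'' is purely positional, so $\eta(\bar L)=\eta(L)$ and $s(\bar L)=s(L)$. Combining the three steps gives $A(\bar L)=A(L)$.

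\textbf{Expected main obstacle.} The only delicate point is Step 1, namely checking carefully that the sign conventions for ``positive crossing'' and for the framing curls in \eqref{eq-homfly} are genuinely preserved under reversing both strands. If there is any doubt, I would fall back on verifying directly that $L\mapsto A(\bar L)$ satisfies R1--D. Each local picture with all strands reversed is, after a rotation by $\pi$, the same relation or the same relation with both sides swapped, and R0 and I each list both orientations. Uniqueness of the skein computation, as used in Theorem \ref{theorem1.2}, would then give the same conclusion.
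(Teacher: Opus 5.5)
Your proof is correct, but your main argument takes a different route from the paper's.

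\textbf{The paper's route.} The paper proves the proposition in one line, at the level of the defining skein relations. Each of R1--R3', R0, I, D with all orientations reversed is, after rotating the local picture by $\pi$, again one of the listed relations (possibly with sides swapped). So $L\mapsto A(\bar L)$ satisfies the same relations and initial value. Since these relations determine $A$ (the content of Theorem \ref{theorem1.2}), $A(\bar L)=A(L)$. This is exactly your fallback argument.

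\textbf{Your route.} Your main argument works instead with the explicit state sum of Definition \ref{definition2.1}. You match states, since crossing signs and oriented smoothings survive simultaneous reversal. You then invoke reversal invariance of the framed HOMFLY-PT polynomial. Finally you check that $r$, $\eta$, and hence $s$, are unchanged because each product $w_iw_j$ is invariant.

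\textbf{What each buys.} Your route never requires inspecting the plane-curve relations picture by picture. The paper leaves that inspection implicit: for example, that both curl types appear in R1, that both circle orientations appear in R0 and I, and that R3 and R3' map to themselves with sides swapped. Your only real input is a classical fact about HOMFLY-PT. Its proof is the same ``relations are reversal-symmetric plus uniqueness'' argument, but in a setting where uniqueness is standard. The paper's route buys brevity and independence from the auxiliary ring $\tilde R$ and the HOMFLY-PT model.

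Both arguments ultimately rest on the skein relations determining $A$ uniquely. Yours uses it through the identification of the state sum with $A$ in Theorem \ref{theorem1.2}; the paper's uses it directly.
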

\begin{proof}
This holds simply because all the skein relations are invariant under orientation reversal.
\end{proof}

Now we give the proof of Theorem \ref{thm A}.
\begin{proof}
Let $L$ be a plane curve, assume $A(L)=\sum\limits_{k\geq0}a_k(L)h^k$, where $a_k(L)\in\mathbb{Z}[t]$.
\begin{enumerate}
\item When $h=0$, all the skein relations of $A(L)$ given in Section \ref{section1} become trivial, i.e. $a_0(L)$ is preserved under all the (flat) Reidemeister moves. So $a_0(L)=A(L)|_{h=0}=t^{n(L)}$ by the relation of disjoint union.
\item The number $a_1(L)|_{t=1}$ is unchanged under R2' by definition and unchanged under R3 and R3' since $a_0|_{t=1}=1$. Also due to the fact $a_0|_{t=1}=1$, it changes by one under R2. Comparing with the definition of $\op{J}^{+}_{\op{Vir}}(L)$ given in \cite{Viro1996}, it is not difficult to find that $a_1(L)|_{t=1}=\frac{1}{2}\op{J}^{+}_{\op{Vir}}(L)$.
\item For the case of one-component curves, we check the change of $a_1$ under R2, R3 and R3': 

\begin{equation}
    \label{eq-R2-coef}
    a_1\big(\includegraphics[height=1.5cm,valign=c]{figures/skein/uuxx.eps}\big)\,-\,a_1\big(\includegraphics[height=1.5cm,valign=c]{figures/skein/id.eps}\big)\,=\,t a_0\,\big(\,\includegraphics[height=1.5cm,valign=c]{figures/skein/uux.eps}\,\big)=t^3.
\end{equation}

\begin{align}
    \label{eq-R3-coef}
    a_1\big(\includegraphics[height=1.5cm,valign=c]{figures/skein/uuul.eps}\big)\,-\,a_1\big(\includegraphics[height=1.5cm,valign=c]{figures/skein/uuur.eps}\big)\,=\,a_0\,\big(\,\includegraphics[height=1.5cm,valign=c]{figures/skein/uuuxi.eps}\,\big)-a_0\big(\,\includegraphics[height=1.5cm,valign=c]{figures/skein/uuuix.eps}\,\big)\\
    = \begin{cases} t^3 - t \qquad\includegraphics[height=3cm,valign=c]{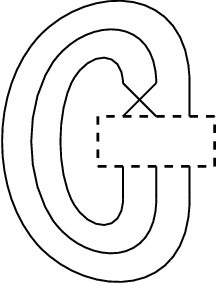}\\ \\t - t^3 \qquad\includegraphics[height=3cm,valign=c]{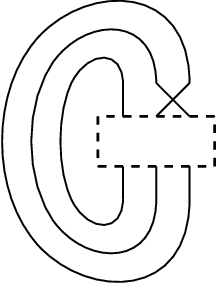}\end{cases}\nonumber
\end{align}

\begin{align}
    \label{eq-R3-1-coef}
    a_1\big(\includegraphics[height=1.5cm,valign=c]{figures/skein/dudl.eps}\big)\,-\,a_1\big(\includegraphics[height=1.5cm,valign=c]{figures/skein/dudr.eps}\big)\,=\,a_0\,\big(\,\includegraphics[height=1.5cm,valign=c]{figures/skein/dudc1.eps}\,\big)-a_0\big(\,\includegraphics[height=1.5cm,valign=c]{figures/skein/dudcl1.eps}\,\big)\\
    = \begin{cases} t - t^3 \qquad \includegraphics[height=2cm,valign=c]{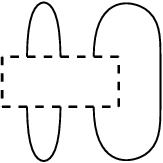}\\\\ t^3 - t \qquad\includegraphics[height=2cm,valign=c]{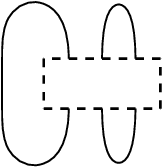}\end{cases}\nonumber
\end{align}

For R2, let $K_2, L_1, K_0$ be the plane curves with $2,1,0$ self-intersections locally. Then $K_2, K_0$ are one-component curves, and $L_1$ has two components. So $$a_1(K_2)-a_1(K_0)=ta_0(L_1)=t^3.$$

For R3 or R3', let $K, K'$ be the curves with three self-intersections, and $L, L'$ be the curves with one self-intersection. Since $K, K'$ are both one-component curves, one of $L, L'$ has one component and the other has three components depending on the connecting pattern of $K$ outside the local part. It follows that $$a_1(K)-a_1(K')=a_0(L)-a_0(L')=\pm(t-t^3).$$
Thus we have $a_1(K)=b_1(K)t+c_1(K)t^3.$ Setting $t=1$, $b_1(K)+c_1(K)=a_1(K)|_{t=1}$ is invariant under R3, hence a $\op{J}^+$-type invariant. For R2, $a_1(K_2)|_{t=1}-a_1(K_0)|_{t=1}=1$. So we have $b_1(K)+c_1(K)=\frac{1}{2}\op{J}^+(K)$. For $b_1(K)$, the change under R3 is $b_1(K)-b_1(K')=\pm 1$. Comparing with the local change of $\op{St}(K)$, we conclude that $b_1(K)=-\op{St}(K)$.
\item Recall that $A(L)=q^{s(L)}[L]$, here $s(L)=2\eta(L)-r(L)$. It is easy to see that
\begin{center}
$\eta(L_1\#L_2)=\eta(L_1)+\eta(L_2)$, $r(L_1\#L_2)=r(L_1)+r(L_2)-1$.
\end{center}
On the other hand, we have $qt[L_1\#L_2]=[L_1][L_2]$. It follows that
\begin{center}
$tA(L_1\#L_2)=tq^{s(L_1\#L_2)}[L_1\#L_2]=tqq^{s(L_1)}q^{s(L_2)}(qt)^{-1}[L_1][L_2]=A(L_1)A(L_2)$.
\end{center}
\end{enumerate}
\end{proof}

\begin{remark}\label{remark4.2}
We end this subsection with two remarks:
\begin{enumerate}
  \item For a plane curve $L$ with component number $n(L)$, the coefficient of $h^k$ has the form 
  \begin{center}
  $a_k(L)=\sum\limits_{i=0}^ka_{k, n+2i}t^{n+2i}$.
  \end{center}
  \item For a plane curve $K$ with component number one, $a_1(K)$ has the form 
  \begin{center}
  $a_1(K)=-\op{St}(K)t+\frac{1}{2}(\op{J}^+(K)+2\op{St}(K))t^3$.
  \end{center}
  It was pointed out by Arnold in \cite{Arnold1994} that all tree-like plane curves satisfy $\op{J}^++2\op{St}=0$. Here a plane curve is called \emph{tree-like} if its Gauss diagram consists of nonintersecting chords. Later, Polyak reproved this result by interpreting $\op{J}^++2\op{St}$ as a count of intersecting chord pairs in the Gauss diagram \cite[Corollary 2]{Polyak1998}. It is not difficult to observe that the coefficient of $t^3$ in $a_1(K)$ vanishes, since a tree-like curve can be transformed into a disjoint union of trivial circles just using R1. This offers a new perspective on the fact that all tree-like plane curves satisfy $\op{J}^++2\op{St}=0$. On the other hand, Arnold suggested that $\op{J}^++2\op{St}$ should be compared with functions having a zero of certain order at the origin in the same sense in which the Vassiliev invariants behave like polynomials. The coefficients of $a_k$ provide a possibility to realize Arnold's vision.
\end{enumerate}
\end{remark}

\subsection{The invariant $B(L)$}
As we mentioned before, the invariant $B(L)$ is of $\op{St}$-type, i.e. it is invariant under R2 and R2'. Before further discussing the properties of $B(L)$, we first give an example of computing the polynomial $B(L)$.
  
\begin{example} \label{ex B}
Let $L_n$ denote the closure of a braid-like plane curve $(s_1s_2)^n$ with three strands for $n\ge 0$, see Figure \ref{fig: 6-1}.
\begin{figure}
    \centering
    \includegraphics[width=13cm]{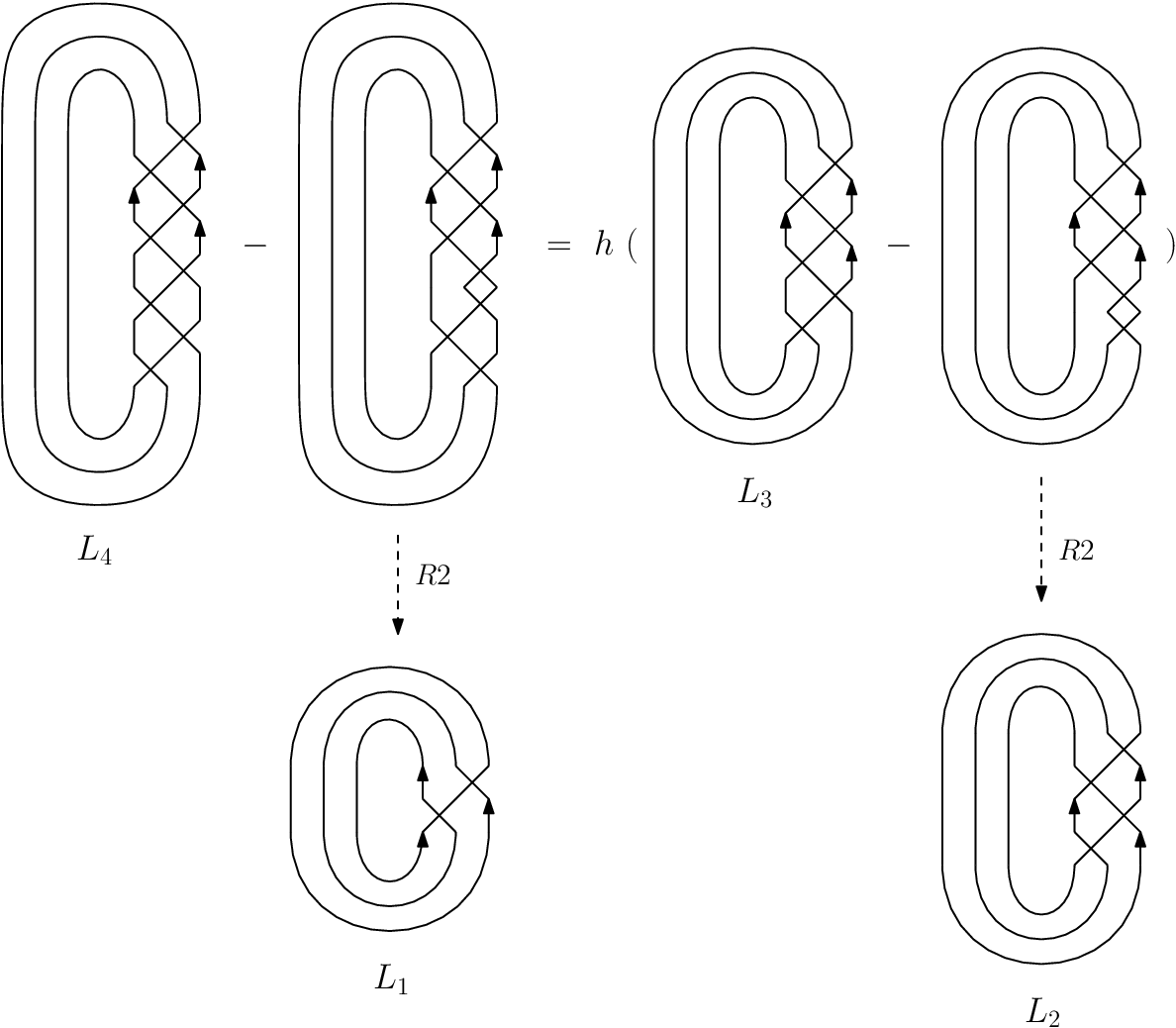}
    \caption{}
    \label{fig: 6-1}
\end{figure}

The skein relation R3 implies that 
$$B(L_n)-B(L_{n-3})=h(B(L_{n-1})-B(L_{n-2})),$$
since $B$ is invariant under R2. Then we have 
\begin{align*}
B(L_0)=0, \quad B(L_1)=(1+h)^{-3}, \quad B(L_2)=(1+h)^{-2}, \\
 B(L_3)=h^2(1+h)^{-3}, \quad B(L_4)=(1-h)^2(1+h)^{-2}.
\end{align*}
Note that $L_3$ is the shadow of the Borromean ring, and $h^2|B(L_3)$.
\end{example}

Now we turn to the proof of Theorem \ref{thm B}.
\begin{proof}
Suppose $L$ is a plane curve with $n(L)$ components, we shall now prove the five assertions of Theorem \ref{thm B} individually.
\begin{enumerate}
\item This has been verified in Section \ref{section3}.
\item According to Theorem \ref{thm A} (4), we have $A(L_1\# L_2)=t^{-1}A(L_1)A(L_2)$. Thus $B(L_1\# L_2)=B(L_1)B(L_2)$.
\item We restrict ourselves to a one-component plane curve $K$, the identities $b_0(K)=1, b_1(K)=-\op{St}$ follow directly from Theorem \ref{thm A} (1) and (3).

Recall that an $\op{St}$ type invariant $V$ is of \emph{finite order $n$} if $V$ evaluates as zero on any plane curve with $m$ singular triple points for $m>n$, and there exists a plane curve with $n$ singular tripe points such that $V$ evaluated on it is nonzero. For instance, Arnold's $\op{St}$ is of order one \cite{Arnold1994}. The skein relations R3 and R3' imply that the coefficient $b_n$ is of order at most $n$ by induction on $n$.

We show that it is of order exactly $n$ as follows. Let $K_s$ be the plane curve with one singular triple point depicted in Figure \ref{fig: h_order_1}. It interpolates between the closures $K_0$ of $s_1s_2s_1s_1$ and $K_1$ of $s_2s_1s_2s_1$. It follows from Example \ref{ex B} that $B(K_0)=B(L_1), B(K_1)=B(L_2)$, and 
$$B(K_s)=B(K_1)-B(K_0)=h(1+h)^{-3}=h-3h^2+\cdots.$$
Let $K_s^n$ denote a connected sum of $n$ copies of $K_s$, with $n$ singular triple points. The multiplicative property implies that $B(K_s^n)=B(K_s)^n=h^n(1+h)^{-3n}.$ Thus $b_n(K_s^n)=1\neq0$. Hence, $b_n$ is of order exactly $n$.

\begin{figure}[ht]
    \centering
    \includegraphics[width=3cm]{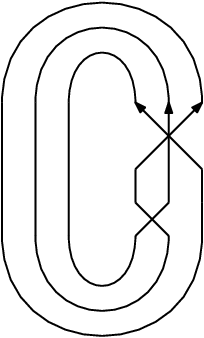}
    \caption{}   
    \label{fig: h_order_1}
\end{figure}

\item If $L$ is a disjoint union, then $t^2|A(L)$ follows from the disjoint union property D. It follows that $B(L)=0$. 
\item We consider two cases depending on the parity of $n(L)$:

If $n(L)$ is even, then $A(L)$ only contains even powers of $t$, since each skein relation preserves the parity of the exponents of $t$. See Remark \ref{remark4.2} (1). Thus we have $B(L)=0$, the result holds. 

If $n(L)$ is odd, it suffices to consider the case that $L$ is nonsplittable, otherwise $B(L)=0$. See Theorem \ref{thm B} (4). Recall the state sum formula of $B(L)$ as the average of Conway polynomials in Proposition \ref{prop B sum}. In this case, $s(L)+c(L)+1=\op{J}^+_{\text{Vir}}(L)$ is even, thus $h\nmid q^{s(L)+c(L)+1}$. We introduce variables $x, r$ such that
$$z=x-x^{-1}, \quad r=\frac{x+x^{-1}}{2},$$
where $z$ is the variable in Conway polynomial. According to the relation between the variable $z$ and $h$ (\ref{eq z}), as infinitesimals, we have $h \sim z^2 \sim r-1$. It suffices to show that $(r-1)^{n(L)-1}| \overline{\nabla}(L)$, where 
\begin{center}
$\overline{\nabla}(L)=2^{-c(L)}\sum_{\epsilon \in \{+,-\}^{c(L)}}\nabla(L_{\epsilon}).$
\end{center}

We proceed to prove this result in three steps:
\begin{enumerate}
\item express each $\nabla(L_{\epsilon})$ as a determinant via the Kauffman states; 
\item express the average $\overline{\nabla}(L)$ as a determinant of a single matrix in the variable $r$; 
\item compute the order of $r-1$ of this determinant.
\end{enumerate}

\noindent 
Step (a):  Kauffman expresses the Conway polynomial of a link diagram $L'$ as a sum over Kauffman states \cite{Kauffman1983}. This state sum is the permanent $\op{Perm}(G(L'))$ of the following matrix $G(L')$. Since $L$ is not a disjoint union, the link diagram $L'$ has $c$ crossings and $c+2$ regions. The matrix $\tilde{G}(L')$ is an $c \times (c+2)$ matrix, where rows are indexed by crossings and columns are indexed by regions. The entries are given by the local weights as in the upper line of Figure \ref{fig: h_order_2}. The matrix $G(L')$ is obtained from $\tilde{G}(L')$ by deleting two columns corresponding to two adjacent regions. 

\begin{figure}[ht]
    \centering
    \includegraphics[width=6cm]{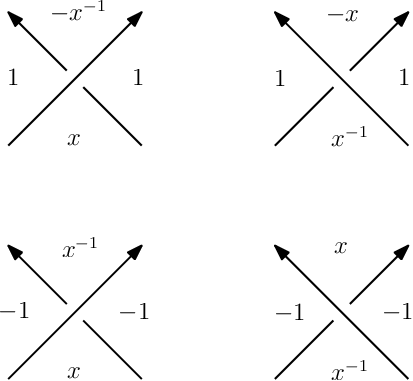}
    \caption{}   
    \label{fig: h_order_2}
\end{figure}

Cohen, Dasbach and Russell explained the Conway polynomial as a dimer model \cite[Theorem~2.6 and Section~3]{CohenDasbachRussell2014}, and showed that $\op{Perm}(G(L'))=\pm\det (M(L'))$, where $M(L')$ is called the {\em Kasteleyn matrix} transformed from $G(L')$ by changing the local weights; see the lower line of Figure \ref{fig: h_order_2}. Here the $\pm$ sign only depends on the planar projection of $L'$. Note that the weights in positive and negative crossings are related by exchanging $x$ and $x^{-1}$.

\noindent 
Step (b): The matrices $M(L_{\epsilon})$ for different crossing lifts differ by changing the row vectors corresponding to positive or negative crossings. So the average $\overline{\nabla}(L)$ is equal to
$$2^{-c}\sum_{\epsilon}\nabla(L_{\epsilon})=2^{-c}\sum_{\epsilon}\op{Perm}(G(L_{\epsilon}))=\pm 2^{-c}\sum_{\epsilon}\det(M(L_{\epsilon}))=\pm\det (M(L)).$$
Here each row of $M(L)$ is the average of the two rows corresponding to the positive and negative crossings. The local weights of $M(L)$ are $r, -1, r, -1$ as in Figure \ref{fig: h_order_3}, where $r=\frac{x+x^{-1}}{2}$. Thus, $M(L)$ is a matrix whose entries are $0, -1, r$.

\begin{figure}[ht]
    \centering
    \includegraphics[width=2.1cm]{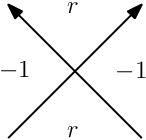}
    \caption{}   
    \label{fig: h_order_3}
\end{figure}

\noindent 
Step (c):  We suppress $L$ in $M(L)$, and write $M$ for simplicity. We write $M=M_1+ (r-1)S$, where $M_1=M|_{r=1}$ and $S$ do not contain $r$. By expanding $\det(M)$ in multiple columns, we see that $\dim\ker (M_1)\ge n-1$ implies that $(r-1)^{n-1}|\det (M)$. Let $\tilde{M}$ denote the $c \times (c+2)$ matrix obtained from $M$ by adding the two deleted columns back. It is enough to show that $\dim \ker (\tilde{M}_1) \ge n+1$. 

Denote the components of $L$ by $K_1, \cdots, K_n$. For each $K_j$ $(1\leq j\leq n)$, we associate an $(c+2)$-dimensional vector $w_j$ by letting $w_j(R)$ be the winding number of $K_j$ around the region $R$. Winding numbers add up to zero for the four regions near each crossing. Thus, $w_j\in\ker (\tilde{M}_1)$. Define $w_0$ by $w_0(R)=1$ for any region $R$. Then $w_0\in\ker(\tilde{M}_1)$ since each row of $\tilde{M}_1$ contains exactly two $1$'s and two $-1$'s.

It remains to show that $w_0, w_1, \dots, w_n$ are linearly independent. Assume $\sum\limits_{j=1}^na_jw_j=0$, since $w_j(R_0)=0$ $(1\leq j\leq n)$ for the unbounded region $R_0$, thus $a_0=0$. Let $R$ be a region adjacent to $R_0$, separated by an arc of $K_j$. Then $R$ lies outside of $K_i$ for any $i\neq j$, so that $w_i(R)=0$. This implies that $a_j=0$. Repeating the above argument, we concludes that all $a_i$ vanishes. Note that here the assumption that $L$ is nonsplittable is essentially used.

We finally show that this estimate of $h$-order is sharp. Recall from Example \ref{ex B} that $L_3$ is the Borromean ring, and $B(L_3)=h^2(1+h)^{-3}$. Take a connected sum of $n$ copies of $L_3$, denoted by $L_3^n$, which has $2n+1$ components. We have $B(L_3^n)=B(L_3)^n=h^{2n}(1+h)^{-3n}$, whose $h$-order is $2n$.
\end{enumerate}
\end{proof}

As an application, we can use the polynomial invariant $B(L)$ to distinguish plane curves with the same invariant $\operatorname{St}$.
\begin{example}
Let $K, K'$ be the plane curves depicted in Figure \ref{fig: 6}.
\begin{figure}[ht]
    \centering
    \includegraphics[width=10cm]{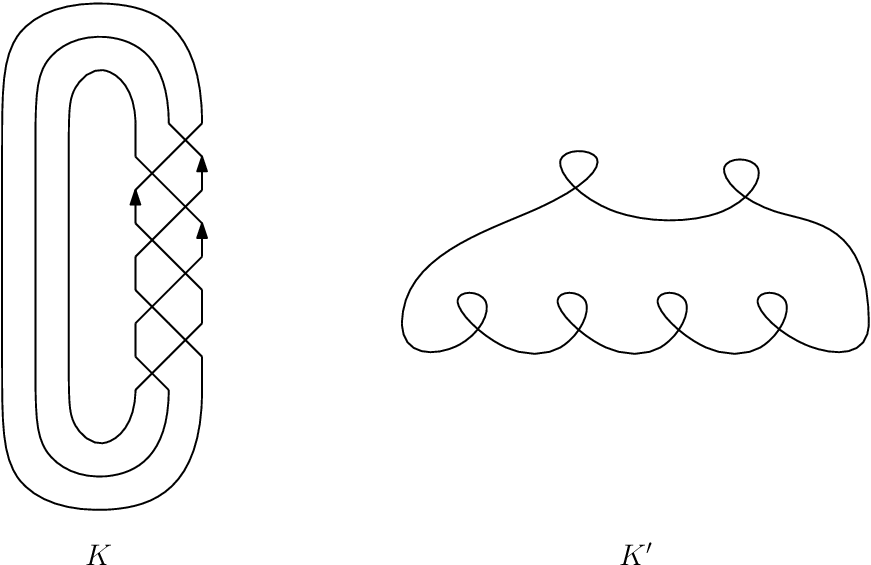}
    \caption{}   
    \label{fig: 6}
\end{figure}
Note that here we omit the orientation of $K'$ due to Proposition \ref{proposition4.1}. Direct calculation shows that 
\begin{center}
$B(K')=(1+h)^{-4}=1-4h+10h^2-\cdots$,
\end{center}
and
\begin{center}
$B(K)=(1-h)^2(1+h)^{-2}=1-4h+8h^2-\cdots$.
\end{center}
Thus they have the same $\op{St}=4$, but different $b_2$.
\end{example}

\printbibliography

\end{document}